\definecolor {refcol}{RGB}{40,0,255}
\newfont{\footsc}{cmcsc10 at 8truept}
\newfont{\footbf}{cmbx10 at 8truept}
\newfont{\footrm}{cmr10 at 10truept}
\newtheorem{theorem}{Theorem}
\newtheorem*{theorem*}{Theorem}
\newtheorem{corollary}[theorem]{Corollary}
\newtheorem{lemma}[theorem]{Lemma}
\newtheorem{proposition}[theorem]{Proposition}
\begin{document}

\title{Degenerate Tur\'an problems for hereditary properties}
\author{Vladimir Nikiforov \thanks{Department of Mathematical Sciences, University of
Memphis, \texttt{vnikifrv@memphis.edu}}
\and Michael Tait\thanks{Department of Mathematical Sciences, Carnegie Mellon
University, \texttt{mtait@cmu.edu}. Research supported by National Science
Foundation grant DMS-1606350.}
\and Craig Timmons\thanks{Department of Mathematics and Statistics, California
State University Sacramento, \texttt{craig.timmons@csus.edu}. This work was
supported by a grant from the Simons Foundation (\#35419, Craig Timmons)}}
\maketitle

\abstract{Let $H$ be a graph and $t\geq s\geq 2$ be integers. We prove that if $G$ is an $n$-vertex graph with no copy of $H$ and no induced copy of $K_{s,t}$, then $\lambda(G) = O\left(n^{1-1/s}\right)$ where $\lambda(G)$ is the spectral radius of the adjacency matrix of $G$. Our results are motivated by results of Babai, Guiduli, and Nikiforov bounding the maximum spectral radius of a graph with no copy (not necessarily induced) of $K_{s,t}$.}

\section{Introduction}

Many questions in extremal graph theory start from the classical Tur\'an-type
question: given a forbidden subgraph $H$, what is the maximum number of edges
in an $n$-vertex graph that does not contain $H$ as a subgraph? This maximum
is denoted by $\mathrm{ex}(n, H)$. The Erd\H{o}s-Stone Theorem gives an
asymptotic formula for $\mathrm{ex}(n, H)$ when $\chi(H) \geq3$ which is
quadratic in $n$. On the other hand, the K\H{o}vari-S\'os-Tur\'an Theorem
\cite{kst} implies that for any bipartite graph $H$, there is a positive
constant $\delta$ such that $\mathrm{ex}(n, H) = O\left(  n^{2-\delta}\right)
$. Tur\'an problems forbidding bipartite graphs are often called
\emph{degenerate} and have been studied extensively \cite{survey}.  

In many cases, theorems in classical extremal graph theory may be strengthened
via spectral graph theory. For an $n$-vertex graph $G$, let $\lambda_{1}%
\geq\lambda_{2}\geq\cdots\geq\lambda_{n}$ be the eigenvalues of its adjacency
matrix. Write $\lambda(G)=\lambda_{1}$ for the \emph{spectral radius} of $G$.
\ Since the spectral radius satisfies the inequality $2e\left(  G\right)
/n\leq\lambda(G)$, any upper bound on $\lambda(G)$ implies an upper bound on
$e(G)$. For example, in \cite{n1}, Nikiforov improved a result of Babai and
Guiduli \cite{babai} as follows:

\begin{theorem*}
[Nikiforov]Let $s\geq t\geq2$, and let $G$ be a $K_{s,t}$-free graph of order
$n$. If $t=2$, then
\[
\lambda\left(  G\right)  \leq1/2+\sqrt{(s-1)(n-1)+1/4}.
\]
If $t\geq3$, then
\[
\lambda\left(  G\right)  \leq(s-t+1)^{1/t}n^{1-1/t}+(t-1)n^{1-2/t}+t-2.
\]

\end{theorem*}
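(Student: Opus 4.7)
The plan is to combine the Perron--Frobenius eigenvector of $G$ with the K\H{o}v\'ari--S\'os--Tur\'an common-neighborhood bound. Let $A=A(G)$, $\lambda=\lambda(G)$, and let $\mathbf{x}\ge 0$ be a Perron eigenvector with $A\mathbf{x}=\lambda\mathbf{x}$. Normalize $\mathbf{x}$ so that its maximum entry equals $1$, say $x_{v^*}=1$ for some vertex $v^*$. The eigenvector equation at $v^*$ gives
\[
\lambda=\lambda\,x_{v^*}=\sum_{u\sim v^*}x_u\le d(v^*),
\]
while the $K_{s,t}$-free hypothesis translates, for any $t$ distinct vertices $u_1,\ldots,u_t$, into the common-neighborhood bound $|N(u_1)\cap\cdots\cap N(u_t)|\le s-1$.

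For $t=2$ the plan is to iterate the eigenvector equation: applying $A$ twice at $v^*$ yields
\[
\lambda^2=\lambda^2\,x_{v^*}=\sum_w (A^2)_{v^*w}\,x_w=d(v^*)+\sum_{w\ne v^*}|N(v^*)\cap N(w)|\,x_w,
\]
and the $K_{s,2}$-free bound together with $x_w\le 1$ already gives $\lambda^2\le d(v^*)+(s-1)(n-1)$. A sharper global analysis of the quadratic form $\mathbf{x}^T(A^2-A)\mathbf{x}$, using the Laplacian positivity $\mathbf{x}^T L\mathbf{x}\ge 0$ (equivalently $\mathbf{x}^T D\mathbf{x}\ge\lambda$) together with $\sum_v x_v\le\sqrt{n}$, refines the above to $\lambda^2-\lambda\le(s-1)(n-1)$; solving the quadratic then produces the stated bound.

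For $t\ge 3$ the plan is to raise the eigenvector equation to the $t$th power. Expanding
\[
(\lambda\,x_v)^t=\biggl(\sum_{u\sim v}x_u\biggr)^t=\sum_{(u_1,\ldots,u_t)\in N(v)^t}x_{u_1}\cdots x_{u_t}
\]
and summing over $v$ yields
\[
\lambda^t\sum_v x_v^t=\sum_{(u_1,\ldots,u_t)\in V^t}x_{u_1}\cdots x_{u_t}\,\bigl|N(u_1)\cap\cdots\cap N(u_t)\bigr|.
\]
Partition the outer sum into tuples with all distinct coordinates and tuples with a repetition. On the distinct part, $K_{s,t}$-freeness supplies the intersection bound $\le s-1$; a careful symmetrization that accounts for the $t!$ orderings of each $t$-subset, together with the power-mean bound $\sum_v x_v^t\ge n^{1-t/2}$ (under $\|\mathbf{x}\|_2=1$) and $\sum_v x_v\le\sqrt{n}$, refines the effective constant to $s-t+1$ and extracts the leading $(s-t+1)^{1/t}n^{1-1/t}$. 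The tuples with a repetition contribute the additive corrections $(t-1)n^{1-2/t}+(t-2)$, bounded by the crude estimates $|N(\cdot)|\le n-1$ and $x_u\le 1$.

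The main obstacle is extracting these sharp constants. For $t=2$, replacing $d(v^*)$ by $\lambda$ in the local bound requires a genuinely global computation via $\mathbf{x}^T(A^2-A)\mathbf{x}$ rather than just the eigenvector identity at $v^*$. For $t\ge 3$, the leading constant $(s-t+1)^{1/t}$ (rather than the naive $(s-1)^{1/t}$) and the exact lower-order term $(t-1)n^{1-2/t}+(t-2)$ demand careful bookkeeping of ordered versus unordered $t$-tuples and of the precise contribution of the repetition strata. Once the resulting polynomial inequality in $\lambda$ is established, elementary algebra produces the stated bound.
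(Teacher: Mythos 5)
First, note that the paper does not prove this theorem: it is quoted as background and attributed to \cite{n1}, so there is no in-paper proof to compare against. Judged on its own, your proposal is a sketch whose $t=2$ step fails as stated. You propose to upgrade the local bound $\lambda^{2}\le d(v^{*})+(s-1)(n-1)$ to $\lambda^{2}-\lambda\le(s-1)(n-1)$ via the quadratic form $\mathbf{x}^{T}(A^{2}-A)\mathbf{x}$ together with the ``Laplacian positivity'' $\mathbf{x}^{T}D\mathbf{x}\ge\lambda$. That inequality points the wrong way: the decomposition $\lambda^{2}=\mathbf{x}^{T}D\mathbf{x}+\sum_{u\ne v}|N(u)\cap N(v)|x_{u}x_{v}$ needs an \emph{upper} bound on $\mathbf{x}^{T}D\mathbf{x}$ by $\lambda$, and no such bound holds in general (for the star $K_{1,n-1}$, which is $K_{s,2}$-free, $\mathbf{x}^{T}D\mathbf{x}=n/2$ while $\lambda=\sqrt{n-1}$). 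The argument that works is purely local at the vertex $v^{*}$ of maximal eigenvector entry, normalized so $x_{v^{*}}=1$: from $\lambda^{2}=d(v^{*})+\sum_{w\ne v^{*}}|N(v^{*})\cap N(w)|\,x_{w}$ and $\lambda=\sum_{u\sim v^{*}}x_{u}$ one gets
\[
\lambda^{2}-\lambda=\sum_{u\sim v^{*}}(1-x_{u})+\sum_{w\ne v^{*}}|N(v^{*})\cap N(w)|\,x_{w}\le\sum_{w\ne v^{*}}(1-x_{w})+(s-1)\sum_{w\ne v^{*}}x_{w}\le(s-1)(n-1),
\]
using $0\le x_{w}\le 1$ and $|N(v^{*})\cap N(w)|\le s-1$; solving the quadratic gives the stated bound. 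This is a different mechanism from the one you describe, and the global quadratic-form route cannot be repaired to yield it.

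For $t\ge3$, your identity $\lambda^{t}\sum_{v}x_{v}^{t}=\sum_{(u_{1},\dots,u_{t})}x_{u_{1}}\cdots x_{u_{t}}|N(u_{1})\cap\cdots\cap N(u_{t})|$ and the power-mean step $\sum_{v}x_{v}^{t}\ge n^{1-t/2}$ are correct, but they only deliver the leading constant $(s-1)^{1/t}$. The improvement to $(s-t+1)^{1/t}$ is precisely the content of F\"uredi's refinement of the K\H{o}vari--S\'os--Tur\'an count and does not follow from ``symmetrization over the $t!$ orderings''; it requires a genuinely different counting of $t$-sets inside neighborhoods. Likewise, the exact error term $(t-1)n^{1-2/t}+t-2$ does not drop out of crude estimates on the repetition strata; one must establish a polynomial inequality in $\lambda$ and apply a root-location lemma. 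These two points are where all the difficulty of the theorem lives, and your sketch defers both of them.
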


In view of $2e\left(  G\right)  /n\leq\lambda(G)$, the above theorem implies
also F\"{u}redi's improvement of the K\H{o}vari-S\'{o}s-Tur\'{a}n Theorem
\cite{fur}.

In this paper, we consider a modification of the Tur\'{a}n-type question,
where one forbids \emph{induced copies} of a subgraph $F$. Without additional
restrictions, this problem is trivial if $F$ is not complete, because $K_{n}$
has $\binom{n}{2}$ edges and no induced $F,$ whereas the problem is thoroughly
investigated if $F$ is a complete graph. More precisely, we study the maximum
spectral radius that a graph may have if it has no induced $K_{s,t}$ and no
copy (not necessarily induced) of a fixed forbidden subgraph $H$. We note that
if $\chi(H)\geq3$, then one may forbid $H$ and have quadratically many edges,
or one may forbid an induced copy of $K_{s,t}$ and have quadratically many
edges. One of the main theorems in \cite{ltt} shows that when one forbids both $H$ and $K_{s,t}$-induced at the same time, then a graph may not have quadratically many edges.

\begin{theorem*}[Loh, Tait, Timmons \cite{ltt}]
Let $s$ and $t$ be integers and $H$ be a graph. Then there is a constant $C$ depending on $s$, $t$, and $H$ such that if $G$ is a graph on $n$ vertices which has no copy of $H$ as a subgraph and no copy of $K_{s,t}$ as an induced subgraph, then 
\[
e(G) < Cn^{2-1/s}.
\]
\end{theorem*}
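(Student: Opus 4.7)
The plan is to argue by contradiction: suppose $e(G) \geq C n^{2-1/s}$ for some constant $C = C(s,t,H)$ to be chosen large, and produce an induced $K_{s,t}$ in $G$. Let $h = |V(H)|$. Since $G$ is $H$-free it contains no clique $K_h$, so the classical Ramsey numbers $R_0 := R(s,h)$ and $R_1 := R(t,h)$ guarantee that every set of $R_0$ (respectively $R_1$) vertices in $G$ spans an independent set of size $s$ (respectively $t$).

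The target is to locate an independent $s$-set $S$ whose common neighborhood $N(S) = \bigcap_{v\in S} N(v)$ has size at least $R_1$; Ramsey applied inside $N(S)$ then produces an independent $t$-set $T\subseteq N(S)$, and $S\cup T$ is an induced $K_{s,t}$, contradicting the hypothesis. To find such an $S$, observe that for any vertex $v$ with $d(v)\geq R_0$, every $R_0$-subset of $N(v)$ contains an independent $s$-set, so a covering argument shows that $N(v)$ contains at least $\binom{d(v)}{s}/\binom{R_0}{s}$ independent $s$-sets. Writing $\mathcal I_s$ for the family of independent $s$-sets of $G$ and swapping the order of summation yields
\[
\sum_{S\in\mathcal I_s} |N(S)| \;=\; \sum_{v} \#\{S\in\mathcal I_s : S\subseteq N(v)\} \;\geq\; \frac{1}{\binom{R_0}{s}}\sum_{v: d(v)\geq R_0} \binom{d(v)}{s}.
\]
Jensen's inequality applied to $\sum_v \binom{d(v)}{s}$, together with the average degree bound $\bar d = 2e(G)/n \geq 2Cn^{1-1/s}$, gives a lower estimate of order $C^s n^s$ for the right-hand side. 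Since $|\mathcal I_s|\leq\binom{n}{s}$, averaging produces an independent $s$-set $S$ with $|N(S)|\geq c\, C^s$ for a constant $c = c(s,H) > 0$, and choosing $C$ so that $c\, C^s \geq R_1$ closes the argument.

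The principal obstacle is the transition from arbitrary $s$-sets to \emph{independent} $s$-sets: a direct Jensen bound on $\sum_S |N(S)|$ produces a large average common neighborhood, but that bound is useless here if the bulk of the contributing $s$-sets contain edges, since the induced-$K_{s,t}$-free hypothesis only constrains common neighborhoods of independent $s$-sides. The Ramsey/covering step above resolves this by invoking the $H$-free assumption to guarantee that a fixed positive fraction (depending only on $s$ and $H$) of the $s$-subsets of every sufficiently large neighborhood is independent, which is precisely what is needed to transfer the convexity estimate from all $s$-sets to $\mathcal I_s$.
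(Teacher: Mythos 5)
Your proof is correct, but it takes a genuinely different route from the one this paper uses. The paper never argues about $e(G)$ directly: it proves the spectral bounds of Theorems \ref{th1} and \ref{th0} (via codegrees of \emph{pairs}, the identity relating $\sum_{X\in\binom{V}{2}}d^2(X)$ to closed walks of length $4$, the power mean inequality, and a count of $K_{2,s}$'s), and then deduces the edge bound from $2e(G)/n\leq\lambda(G)$ as in Corollary \ref{corollary}. You instead anchor the count at single vertices: Jensen on $\sum_v\binom{d(v)}{s}$ with the average degree $2e(G)/n\geq 2Cn^{1-1/s}$, followed by the same Ramsey covering trick the paper isolates as Proposition \ref{pro3} to convert $s$-subsets of a neighborhood into \emph{independent} $s$-sets, and an averaging over $I_s(G)$ to find an independent $s$-set with common neighborhood of size at least $R(t,|V(H)|)$. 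This is essentially the argument of the original Loh--Tait--Timmons paper: more elementary, but it yields only the edge bound and cannot give the spectral strengthening, since a graph can have small average degree and large $\lambda(G)$. Two small points you should tidy up but which do not affect correctness: the vertices with $d(v)<R_0$ must be discarded before applying the covering bound (they contribute only $O(n)$ to $\sum_v\binom{d(v)}{s}$, which is negligible against $C^sn^s$), and Jensen requires the usual convex extension of $x\mapsto\binom{x}{s}$; also, using $R(H,K_s)$ and $R(H,K_t)$ in place of the classical $R(s,h)$ and $R(t,h)$ would give the sharper constant recorded in Corollary \ref{corollary}, though your weaker choice is perfectly valid for the stated result.
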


Our main theorems are spectral strengthenings of the above theorem via the same inequality $2e(G)/n \leq \lambda(G)$. 

Finally, we note that this problem could be discussed in a more general
context. A \emph{hereditary graph property} is a family of graphs which is
closed under isomorphisms and taking induced subgraphs. Given a hereditary
property $\mathcal{P}$, let $\mathcal{P}_{n}$ denote the set of $n$-vertex
graphs in $\mathcal{P}$. One may ask to find $\mathrm{ex}(n,\mathcal{P}%
):=\max_{G\in\mathcal{P}_{n}}e(G)$ and $\lambda(n,\mathcal{P}):=\max
_{G\in\mathcal{P}_{n}}\lambda(G)$. Nikiforov \cite{n2} found the asymptotics
of these parameters similarly to the Erd\H{o}s-Simonovits theorem for monotone
graph properties. Let us note that both Erdos-Simonovits's and Nikiforov's
theorems are informative only for problems with dense extremal graphs. Not
surprisingly, extremal problems leading to sparse extremal graphs are harder
and need special methods. As before, we call such problems \emph{degenerate}.

In this note we focus on a degenerate extremal problem that we feel is
quintessential for the area; our main result reads as follows.

\begin{theorem}
\label{th1}Let $t\geq$ $s\geq3$ be integers, $H$ be a graph, and $K\geq(R(H, K_{t}))^{2/s}R(H, K_{s})$. If
$G$ is an $H$-free graph of order $n$ and%
\begin{equation}
\lambda\left(  G\right)  \geq Kn^{1-1/s}, \label{c1}%
\end{equation}
then $G$ contains an induced copy of $K_{s,t}$.
\end{theorem}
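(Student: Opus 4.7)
\medskip
\noindent\textbf{Proof proposal.} I argue the contrapositive: assume $G$ is $H$-free and contains no induced $K_{s,t}$; the goal is $\lambda(G)<Kn^{1-1/s}$. The first ingredient is a Ramsey observation on common neighborhoods: for every independent $s$-set $S\subseteq V(G)$ one must have $|N(S)|\leq R(H,K_t)-1$, since otherwise the $H$-free graph $G[N(S)]$ contains an independent $t$-set $T$ by the definition of the Ramsey number, and $S\cup T$ then induces a $K_{s,t}$, contradicting the hypothesis.

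Next I double count pairs $(S,v)$ with $S$ an independent $s$-set and $v\in N(S)$. Writing $I_s(v)$ for the number of independent $s$-subsets of $N(v)$, this yields
\[
\sum_{v}I_s(v)=\sum_{\substack{S\text{ indep}\\|S|=s}}|N(S)|\leq\bigl(R(H,K_t)-1\bigr)\binom{n}{s}.
\]
Since $G[N(v)]$ is $H$-free, every $R(H,K_s)$-subset of $N(v)$ contains an independent $s$-set; as each independent $s$-set in $N(v)$ lies in $\binom{d(v)-s}{R(H,K_s)-s}$ such supersets, a second double count gives $I_s(v)\geq\binom{d(v)}{s}/\binom{R(H,K_s)}{s}$ whenever $d(v)\geq R(H,K_s)$. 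Absorbing the remaining vertices into an $O(n)$ error, I obtain the moment inequality
\[
\sum_v\binom{d(v)}{s}\leq\binom{R(H,K_s)}{s}\bigl(R(H,K_t)-1\bigr)\binom{n}{s}+O(n).
\]

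Finally, the Babai--Guiduli--Nikiforov argument recalled in the introduction derives its spectral bound from precisely such a moment inequality: an upper bound $\sum_v\binom{d(v)}{s}\leq C\binom{n}{s}$ gives $\lambda(G)\leq C^{1/s}n^{1-1/s}+O(n^{1-2/s})$. Taking $C=\binom{R(H,K_s)}{s}(R(H,K_t)-1)$ and using $\binom{R(H,K_s)}{s}\leq R(H,K_s)^{s}/s!$ gives
\[
\lambda(G)\leq\frac{R(H,K_s)\,R(H,K_t)^{1/s}}{(s!)^{1/s}}\,n^{1-1/s}+O(n^{1-2/s}),
\]
which is smaller than $Kn^{1-1/s}=R(H,K_s)R(H,K_t)^{2/s}n^{1-1/s}$ by the multiplicative factor $(s!)^{1/s}R(H,K_t)^{1/s}\geq 6^{1/3}>1$ (using $s\geq 3$). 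This fixed gap swallows the $O(n^{1-2/s})$ correction for all $n$ larger than a constant depending on $H,s,t$; for smaller $n$ the trivial bound $\lambda(G)\leq n-1<Kn^{1-1/s}$ already suffices, so $\lambda(G)<Kn^{1-1/s}$ holds in every case. The main obstacle is this last step: verifying cleanly that the Nikiforov eigenvector manipulation passes through the moment inequality with our constant $C$ rather than the $t-1$ of the original $K_{s,t}$-free setting, and tracking the lower-order contributions and the low-degree vertices.
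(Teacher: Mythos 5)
Your Ramsey observation, the double count of pairs $(S,v)$, and the lower bound $I_s(v)\geq\binom{d(v)}{s}\binom{R(H,K_s)}{s}^{-1}$ are all sound (the last is exactly the paper's Proposition \ref{pro3} applied to $G[N(v)]$). The gap is the final step: the claimed implication ``$\sum_v\binom{d(v)}{s}\leq C\binom{n}{s}$ implies $\lambda(G)\leq C^{1/s}n^{1-1/s}+O(n^{1-2/s})$'' is false as a standalone statement, and the Babai--Guiduli--Nikiforov argument does not factor through such a degree-moment inequality. For a counterexample to the implication, let $G$ be $K_m$ together with $n-m$ isolated vertices, where $m=\lfloor n^{s/(s+1)}\rfloor$: then $\sum_v\binom{d(v)}{s}=m\binom{m-1}{s}=O\bigl(\tbinom{n}{s}\bigr)$, yet $\lambda(G)=m-1=\Theta\bigl(n^{1-1/(s+1)}\bigr)\gg n^{1-1/s}$. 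What the eigenvector expansion in that argument actually uses is the \emph{pointwise} codegree bound $d(v_1,\dots,v_s)\leq t-1$ for (almost all) $s$-tuples, applied inside $\sum_u(\lambda x_u)^s=\sum_{(v_1,\dots,v_s)}d(v_1,\dots,v_s)\,x_{v_1}\cdots x_{v_s}$. In your setting that pointwise bound is available only for \emph{independent} $s$-sets; the $s$-tuples spanning edges can have codegree as large as $n$, and their contribution to the eigenvector sum is uncontrolled. Your moment inequality, which only sees the degree sequence, cannot recover this, so the proof as structured does not close.

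The paper sidesteps the problem by running the whole argument on common neighborhoods of \emph{pairs} rather than of single vertices. Proposition \ref{pro1} gives $\sum_{X\in\binom{V}{2}}d^2(X)\geq\frac{1}{2}\bigl(\lambda^4(G)-n\lambda^2(G)\bigr)$ via closed walks of length $4$ and Hofmeister's inequality; the power-mean inequality amplifies this to the $s$-th moment $\sum_X\binom{d(X)}{s}$, i.e.\ to a count of (not necessarily induced) copies of $K_{2,s}$; and Ramsey applied inside each $\Gamma(X)$ converts a positive fraction of these into copies whose $s$-side is independent. This yields $\sum_{I\in I_s(G)}\binom{d(I)}{2}\geq\binom{R(H,K_t)}{2}\binom{n}{s}$, hence an independent $s$-set $I$ with $d(I)\geq R(H,K_t)$, at which point your own Ramsey observation finishes. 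To salvage your outline you would need to replace the degree-moment step by a device of this kind that ties codegrees back to $\lambda$ at the level of pairs (or of the eigenvector), not merely at the level of the degree sequence.
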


Here, and throughout the rest of the paper, $R(H,G)$ is the Ramsey number of  $H$ vs.\ $G$.  
One can question why in the premises of Theorem \ref{th1} the parameter $s$ is
at least $3$, while $2$ seems a more natural value. The reason is that for
$s=2$ we can prove a somewhat stronger estimate as stated in the theorem below.

\begin{theorem}
\label{th0}Let $r\geq2$, $t\geq2$ be integers, and $K\geq R (K_r , K_t)$. If $G$ is a $K_{r+1}%
$-free graph of order $n$ and%
\[
\lambda\left(  G\right)  \geq Kn^{1/2},
\]
then $G$ contains an induced copy of $K_{2,t}$.
\end{theorem}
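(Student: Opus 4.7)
The plan is to induct on $r \geq 2$, taking $K = R(K_r, K_t)$; the statement for any larger $K$ follows a fortiori. I will argue the contrapositive: assuming $G$ is $K_{r+1}$-free with no induced $K_{2,t}$, I show $\lambda(G) < K\sqrt{n}$.

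For the base case $r = 2$, the graph $G$ is triangle-free, so adjacent vertices have no common neighbor. For non-adjacent $u, v$, if $|N(u) \cap N(v)| \geq R(K_2, K_t) = t$, then Ramsey's theorem forces either an edge in $N(u) \cap N(v)$ (giving a triangle with $u$) or an independent $t$-set (giving an induced $K_{2,t}$ on $\{u,v\}$ with the set); both are forbidden. Hence $|N(u) \cap N(v)| \leq t - 1$ for every pair $u \neq v$, so every off-diagonal entry of $A^2$ is at most $t - 1$. Bounding $\lambda(G)^2$ by the maximum row sum of $A^2$ gives $\lambda(G)^2 \leq (n-1) + (t-1)(n-1) = t(n-1) < t^2 n$, contradicting $\lambda(G) \geq t\sqrt{n}$.

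For the inductive step, assume the theorem for $r - 1$ with constant $K' = R(K_{r-1}, K_t)$. Let $\mathbf{x}$ be the Perron eigenvector of $G$, normalized so $x_v = 1$ at a vertex $v$ attaining the maximum. The neighborhood $G[N(v)]$ is $K_r$-free and contains no induced $K_{2,t}$, so the inductive hypothesis yields $\lambda(G[N(v)]) < K' \sqrt{d(v)}$. Starting from $\lambda(G)^2 = \sum_w |N(v) \cap N(w)|\, x_w$, I split according to whether $w = v$, $w \sim v$, or $w \not\sim v$: the first contribution is $d(v)$; the third is at most $K(n - 1 - d(v))$ by the Ramsey bound on non-adjacent codegrees; and the middle sum, which equals $\mathbf{1}_{N(v)}^{\top} A_{G[N(v)]}\, \mathbf{x}_{N(v)}$, is bounded by the operator-norm Cauchy--Schwarz inequality as $\sqrt{d(v)} \cdot K' \sqrt{d(v)} \cdot \|\mathbf{x}_{N(v)}\|_2$. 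Combined with the elementary bound $\|\mathbf{x}_{N(v)}\|_2^2 \leq \sum_{w \sim v} x_w = \lambda(G)$ (using $x_w \leq 1$), this gives
\[
\lambda(G)^2 \leq d(v) + K'\, d(v) \sqrt{\lambda(G)} + K(n - 1 - d(v)),
\]
from which one must extract $\lambda(G) < K\sqrt{n}$ to produce the contradiction.

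The main obstacle is closing this polynomial inequality tightly: the estimate $\|\mathbf{x}_{N(v)}\|_2 \leq \sqrt{\lambda(G)}$ can lose a factor of $\lambda(G)^{1/4}$ relative to what extremal configurations (such as the join $K_{r-1} \vee \overline{K_{n-r+1}}$, where the restricted norm is $O(1)$ while $\sqrt{\lambda}$ is $\Theta(n^{1/4})$) actually satisfy. I expect the remedy to combine two ingredients: first, applying the Perron equation at each $u \in N(v)$, namely $\lambda(G)\, x_u = x_v + \sum_{w \sim u, w \neq v} x_w$, yields an identity for $\sum_{u \in N(v)} x_u^2$ in terms of $\lambda(G)$ and the eigenvector mass on vertices outside $\{v\} \cup N(v)$, and that mass is itself controlled by the same Ramsey bound; second, if $d(v)$ is unusually large (close to $n$), a separate Ramsey argument inside $G[N(v)]$ -- exploiting that non-adjacent pairs in $N(v)$ have fewer than $K'$ common neighbors within $N(v)$, since a $K_{r-1}$ in their common neighborhood together with $v$ would produce a $K_{r+1}$ -- produces the required independent $K_t$ in $N(v)$ and a second witness vertex for the induced $K_{2,t}$ directly.
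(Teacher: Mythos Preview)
Your base case $r=2$ is correct and complete. The genuine gap is in the inductive step: the inequality
\[
\lambda^2 \leq d(v) + K'\,d(v)\sqrt{\lambda} + K(n-1-d(v))
\]
does \emph{not} force $\lambda < K\sqrt{n}$. If $d(v)=\Theta(n)$ (which nothing prevents---indeed in the join $K_{r-1}\vee\overline{K_{n-r+1}}$ the Perron-maximizing vertex has degree $n-1$), the middle term is of order $n\cdot\lambda^{1/2}=\Theta(n^{5/4})$, far exceeding $K^2 n$, so no contradiction arises. Your proposed remedy~1 does not close this: summing $\lambda x_u^2 = x_u\sum_{w\sim u}x_w$ over $u\in N(v)$ and bounding the three pieces as you suggest yields only
\[
(\lambda - K'\sqrt{d(v)})\,\|\mathbf{x}_{N(v)}\|_2^2 \;\leq\; \lambda + K(n-1-d(v)),
\]
which still gives $\|\mathbf{x}_{N(v)}\|_2^2 = O(\sqrt{n})$ and hence the same $n^{5/4}$ middle term. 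Remedy~2 is not a proof either: the refined codegree bound inside $G[N(v)]$ is exactly what the inductive hypothesis already encodes, and you give no mechanism for producing the second witness vertex for the induced $K_{2,t}$ from the assumption that $d(v)$ is large.

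For contrast, the paper's argument is global rather than local. It uses the walk identity $2\sum_{X\in\binom{V}{2}} d(X)^2 = CW_4(G) - \sum_v d(v)^2 \geq \lambda^4 - n\lambda^2$, then applies Tur\'an's theorem \emph{inside each} $G[\Gamma(X)]$ (which is $K_r$-free) to convert a constant fraction of the pairs in $\Gamma(X)$ into independent pairs. Double-counting gives $\sum_{I\in I_2(G)}\binom{d(I)}{2}\gtrsim \frac{1}{r-1}(\lambda^4 - rn\lambda^2)$, and the hypothesis $\lambda\geq K\sqrt n$ with $K\geq R_{r,t}$ makes this exceed $K\binom{n}{2}$, so some independent pair $I$ has $d(I)\geq R_{r,t}$. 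The key step you are missing is this use of Tur\'an's theorem to pass from arbitrary pairs $X$ to independent pairs $I$; your local eigenvector method at a single vertex has no analogue of it.
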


We note that Theorem \ref{th0} may be made more general by forbidding an
arbitrary subgraph $H$ instead of $K_{r+1}$. For ease of exposition, we will
prove Theorem \ref{th0} only when $H=K_{r+1}$. It is clear from the proof of
Theorem \ref{th1} how to generalize the result. The proofs of the two theorems
differ at several points, so we shall keep them separate; they are proved in
Section \ref{main section}.

Finally, in Section \ref{c5} we will consider the specific case when $H =
C_{5}$ and when an induced copy of $K_{2,t}$ is forbidden. This will serve as
an example of how, when more information about $H$ is known, one can obtain
close to tight estimates on the multiplicative constant. We will prove the
following theorem.

\begin{theorem}
\label{th3} Let $t \geq2$ be an integer. If $G$ is a $C_{5}$-free graph with
no induced copy of $K_{2,t}$, then
\[
\lambda(G) \leq\sqrt{2 t + 0.375^{1/2}} n^{1/2} + O (n^{3/8} ).
\]

\end{theorem}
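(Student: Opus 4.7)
The plan is to apply the Perron-Frobenius technique and exploit the drastic restrictions that $C_5$-freeness places on neighborhoods and common neighborhoods. Let $\mathbf{x}$ be a unit Perron eigenvector of $G$ whose maximum entry is attained at a vertex $u$. The pointwise bound $\lambda^{2}x_u=\sum_v(A^2)_{uv}x_v\le x_u\sum_v(A^2)_{uv}$ yields the walk-counting inequality
\[
\lambda^2 \;\leq\; \sum_{w\in N(u)} d(w) \;=\; d(u) + 2\,e(G[N(u)]) + \sum_{v\notin N[u]} c(u,v),
\]
where $c(u,v):=|N(u)\cap N(v)|$, and the task reduces to bounding the last two summands.

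For the codegree of a non-adjacent pair $u,v$, put $C:=N(u)\cap N(v)$. If $|C|\geq t$, then induced $K_{2,t}$-freeness forces an edge $yz$ in $C$. Any $w\in C\setminus\{y,z\}$ non-adjacent to both $y$ and $z$ would close the 5-cycle $u-w-v-y-z-u$, contradicting $C_5$-freeness, so $C\subseteq N[y]\cup N[z]$. A second $C_5$-argument---the 5-cycle $u-a_1-a_2-v-z-u$ whenever $a_1,a_2\in A:=(N(y)\cap C)\setminus\{z\}$ are adjacent---shows that $A$, and by symmetry $B:=(N(z)\cap C)\setminus\{y\}$, are each independent sets. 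Induced $K_{2,t}$-freeness then forces $|A|,|B|\leq\alpha(G[C])\leq t-1$, whence $c(u,v)\leq|A|+|B|+2\leq 2t$. For the edges inside $N(u)$, a $P_4$-subgraph $v_1v_2v_3v_4$ of $G[N(u)]$ would extend via $u$ to the 5-cycle $u-v_1-v_2-v_3-v_4-u$, so $G[N(u)]$ is $P_4$-subgraph-free (and also $K_4$-free, since $K_5\supset C_5$). A short classification shows that every connected graph without a $P_4$ subgraph is one of $K_1$, $K_2$, $K_3$, or a star $K_{1,m}$; each such component satisfies $e\leq v$, so $e(G[N(u)])\leq d(u)$.

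Plugging these estimates into the walk-counting inequality gives
\[
\lambda^2 \;\leq\; d(u)+2d(u)+2t(n-d(u)-1) \;=\; (3-2t)\,d(u)+2tn+O(1),
\]
which for $t\geq 2$ already yields $\lambda\leq\sqrt{2t}\,\sqrt n+O(1)$, comfortably inside the bound claimed by the theorem. Obtaining the precise constant $\sqrt{2t+\sqrt{3/8}}$ together with the $O(n^{3/8})$ error term requires a refinement of the same scheme: one exploits the rigid ``double star'' structure $\{y,z\}\cup A\cup B$ of the extremal common neighborhoods (with edges $yz$, $y$--$A$, $z$--$B$ and no edges inside $A\cup B$ or between $A$ and $B$), and combines it with a Cauchy-Schwarz step on the length-2 walks to produce a quadratic inequality of the form $\lambda^2\leq 2tn+c\,\lambda\,\sqrt n+O(n)$; solving this quadratic produces the additive $\sqrt{3/8}$ inside the square root and the $O(n^{3/8})$ error in $\lambda$. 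The main obstacle is therefore not the structural analysis, which is cleanly forced by a short $C_5$-free argument, but the bookkeeping needed to extract the sharp multiplicative constant.
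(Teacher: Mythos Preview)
Your argument is correct and in fact establishes a \emph{stronger} bound than the theorem asks for, by a route quite different from the paper's. The paper proves a general lemma: using the full unit Perron eigenvector, the identity $\lambda^{2}=\sum_i d(i)x_i^{2}+2\sum_{i<j}d(i,j)x_ix_j$, the Ramsey bound $d(i,j)<R(P_{3},K_{t})=2t-1$ for non-edges, Cauchy--Schwarz, Motzkin--Straus (giving the factor $(3/8)^{1/2}$ from $\omega\le 4$), and finally a $C_{5}$-free double-counting together with the Bollob\'as--Gy\H{o}ri triangle bound to control $\sum_{\{i,j\}\in E}d(i,j)^{2}$. This yields $\lambda^{2}\le (2t+\sqrt{3/8})\,n+O(n^{3/4})$. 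Your approach is local: you work at the maximum-entry vertex $u$, use $\lambda^{2}\le\sum_{w\in N(u)}d(w)$, and bound the three pieces by direct $C_{5}$-structural arguments, obtaining $\lambda^{2}\le 3d(u)+2t(n-d(u)-1)\le 2tn$ for $t\ge 2$. This is more elementary (no Motzkin--Straus, no Bollob\'as--Gy\H{o}ri) and gives the better constant $\sqrt{2t}$; the paper's machinery, on the other hand, is stated for arbitrary forbidden $H$, which your vertex-local argument does not immediately generalise to.

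Two remarks. First, your final paragraph is unnecessary and slightly confused: you do \emph{not} need any ``refinement'' to reach $\sqrt{2t+\sqrt{3/8}}$, because the constant $\sqrt{2t}$ you already obtained is smaller; the theorem follows at once from your displayed inequality. Second, your $5$-cycle $u\!-\!a_{1}\!-\!a_{2}\!-\!v\!-\!z\!-\!u$ for the independence of $A$ never uses the vertex $y$ at all, so it actually shows the stronger fact that $C\setminus\{z\}$ is independent; combined with $\alpha(G[C])\le t-1$ this gives $c(u,v)\le t$, improving your $2t$ and (together with $e(G[N(u)])\le d(u)$) yielding the cleaner $\lambda^{2}\le 3d(u)+t(n-d(u)-1)\le tn$ for $t\ge 3$.
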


For each integer $t$ and prime power $q$ for which $t - 1$ divides $q^{2} -
1$, there is a bipartite $K_{2,t}$-free graph that is $q$-regular and has
$\frac{q^{2} - 1}{t - 1}$ vertices in each part (see F\"{u}redi \cite{fur2}).
Such a graph will have spectral radius $\sqrt{ \frac{1}{2} ( t - 1)n + 1}$
where $n$ is the number of vertices and so Theorem \ref{th3} is best possible
up to a multiplicative factor of at most $2$.

Before concluding our introduction, we mention the following corollary to Theorems \ref{th1} and \ref{th0} which implies one of the main results of \cite{ltt} mentioned above.  

\begin{corollary}\label{corollary}
Let $H$ be a graph and $t \geq s \geq 2$ be integers.  If $G$ is a graph of order $n$ that is $H$-free and 
has no copy of $K_{s,t}$ as an induced subgraph, then 
\[
e(G) \leq \frac{ 1}{2} \left( R ( H , K_t) \right)^{2/s} R( H , K_s) n^{2 - 1/s}.
\]
\end{corollary}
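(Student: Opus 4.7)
The plan is to derive the corollary directly from the contrapositives of Theorems~\ref{th1} and~\ref{th0}, combined with the elementary inequality $2e(G) \le n\lambda(G)$. Set $K := (R(H,K_t))^{2/s} R(H,K_s)$; this is precisely the threshold constant in Theorem~\ref{th1}, and at $s=2$ it coincides with $R(H,K_t)\cdot R(H,K_2)$.

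For $s \ge 3$ I would argue by contradiction. Suppose $e(G) > \tfrac{1}{2}K\, n^{2-1/s}$. Then
\[
\lambda(G) \;\ge\; \frac{2e(G)}{n} \;>\; K\, n^{1-1/s},
\]
and since $G$ is $H$-free, Theorem~\ref{th1} forces an induced copy of $K_{s,t}$ in $G$, contradicting the hypothesis. Hence $e(G) \le \tfrac{1}{2}K\, n^{2-1/s}$, which is the claimed bound.

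For $s = 2$ I would repeat the same contrapositive scheme but invoke Theorem~\ref{th0} in place of Theorem~\ref{th1}. Since Theorem~\ref{th0} is stated only for $K_{r+1}$-free host graphs, I would appeal to the extension to an arbitrary forbidden subgraph $H$ announced by the authors in the paragraph following Theorem~\ref{th0}; its proof follows that of Theorem~\ref{th1} \emph{mutatis mutandis} and yields the constant $R(H,K_t)\cdot R(H,K_2)$, which is exactly $K$ at $s=2$, whereupon the argument closes identically.

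The only real obstacle lies in the $s=2$ case: one must rely on the announced (but not formally stated) generalization of Theorem~\ref{th0} to arbitrary $H$-free graphs. Once this is granted, the corollary is nothing more than a repackaging of the two main theorems through the bound $2e(G)\le n\lambda(G)$.
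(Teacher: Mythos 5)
Your proposal is correct and matches the paper's (implicit) argument exactly: Corollary~\ref{corollary} is presented as an immediate consequence of Theorems~\ref{th1} and~\ref{th0} via the inequality $2e(G)/n \le \lambda(G)$, with no separate proof given, and your contrapositive argument is precisely that deduction. Your handling of the $s=2$ case, leaning on the announced generalization of Theorem~\ref{th0} to arbitrary $H$-free graphs, is likewise exactly what the authors intend when they say the generalization is clear from the proof of Theorem~\ref{th1}.
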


\subsection{Some notation}

If $G$ and $H$ are graphs, we write $H\prec G$ to indicate that $H$ is an
induced subgraph of $G$.\medskip

Given a graph $G,$ we write:

- $V\left(  G\right)  $ for the vertex set of $G$ and $v\left(  G\right)  $
for $\left\vert V\left(  G\right)  \right\vert $;

- $E\left(  G\right)  $ for the edge set of $G$ and $e\left(  G\right)  $ for
$\left\vert E\left(  G\right)  \right\vert $;

- $\Gamma\left(  X\right)  $ for the set of vertices joined to all vertices of
a set $X\subset V\left(  G\right)  $ and $d\left(  X\right)  $ for $\left\vert
\Gamma\left(  X\right)  \right\vert $;

- $G\left[  X\right]  $ for the subgraph of $G$ induced by a set $X\subset
V\left(  G\right)  $;

- $I_{r}\left(  M\right)  $ for the set of independent $r$-sets of $G$ and
$i_{r}\left(  G\right)  $ for $|I_{r}\left(  G\right)  |$;

- $\lambda\left(  G\right)  $ for the largest eigenvalue of the adjacency
matrix of $G$;

- $C_{4}\left(  G\right)  $ for the number of $4$-cycles of $G$;\medskip

We also write:

- $\binom{V}{r}$ for the set of $r$-sets of a set $V$;

- $R(H, G)$ for the Ramsey number of $H$ vs.\ $G$ and $R_{p,q}$ for the Ramsey
number of $K_{p}$ vs. $K_{q}$.$\medskip$

\section{Proofs of the main results}

\label{main section} The following technical statement will be used in the
proofs of both Theorems \ref{th1} and \ref{th0}.

\begin{proposition}
\label{pro1}If $G$ is a graph of order $n,$ then
\begin{equation}
\sum_{X\in\binom{V}{2}}d^{2}\left(  X\right)  \geq\frac{1}{2}\left(
\lambda^{4}\left(  G\right)  -n\lambda^{2}\left(  G\right)  \right)  .
\label{in2}%
\end{equation}

\end{proposition}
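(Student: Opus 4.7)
The plan is to express the left-hand side via entries of $A^2$, where $A$ is the adjacency matrix of $G$, and then use two standard spectral inequalities: $\mathrm{tr}(A^4) \geq \lambda^4(G)$ and the Rayleigh bound applied to the all-ones vector.

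First I would observe that for distinct vertices $u,v$, the $(u,v)$-entry of $A^2$ equals the codegree $d(\{u,v\})$, while the diagonal entry $(A^2)_{uu}$ equals the degree $d(u)$. Computing the trace of $A^4$ in two ways gives
\[
\mathrm{tr}(A^4) \;=\; \sum_{u,v} (A^2)_{uv}^2 \;=\; \sum_{u} d(u)^2 \;+\; 2\!\!\sum_{X \in \binom{V}{2}} d^2(X).
\]
Since $\mathrm{tr}(A^4) = \sum_i \lambda_i^4 \geq \lambda^4(G)$, rearranging yields
\[
2\!\!\sum_{X \in \binom{V}{2}} d^2(X) \;\geq\; \lambda^4(G) \;-\; \sum_{u} d(u)^2.
\]

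Next I would bound $\sum_u d(u)^2$ from above. Writing $\mathbf{d} = A\mathbf{1}$ for the degree vector, we have
\[
\sum_u d(u)^2 \;=\; \mathbf{1}^T A^2 \mathbf{1} \;\leq\; \lambda(A^2) \cdot \|\mathbf{1}\|^2 \;=\; n\,\lambda^2(G),
\]
by the Rayleigh quotient applied to the positive semidefinite matrix $A^2$. Substituting this bound into the previous inequality and dividing by $2$ gives exactly \eqref{in2}.

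There is essentially no obstacle here: the proof is a two-line trace computation combined with Rayleigh. The only thing to be careful about is the $\frac{1}{2}$ factor (coming from unordered pairs vs.\ ordered pairs) and the cancellation of the diagonal contribution $\sum_u d(u)^2$ in $\mathrm{tr}(A^4)$, which is precisely what the $-n\lambda^2(G)$ correction term absorbs.
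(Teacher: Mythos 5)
Your proof is correct and is essentially the paper's own argument in matrix language: your identity $\mathrm{tr}(A^4)=\sum_u d(u)^2+2\sum_{X}d^2(X)$ is exactly the paper's $2\sum_{X}d^2(X)=CW_4(G)-\sum_v d^2(v)$ (derived there via a count of $4$-cycles rather than via the Frobenius norm of $A^2$), and your Rayleigh bound $\mathbf{1}^TA^2\mathbf{1}\leq n\lambda^2(G)$ is precisely Hofmeister's inequality. The only point worth making explicit is that $\lambda(A^2)=\lambda^2(G)$ uses $\lambda_1\geq|\lambda_i|$ for all $i$, which holds by Perron--Frobenius since $A$ is nonnegative.
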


\begin{proof}
Set for short $V:=V\left(  G\right)  $, and observe that
\[
\sum_{X\in\binom{V}{2}}\binom{d\left(  X\right)  }{2}=2C_{4}\left(  G\right)
.
\]
Hence,%
\begin{align}
\sum_{X\in\binom{V}{2}}d^{2}\left(  X\right)   &  =4C_{4}+\sum_{X\in\binom
{V}{2}}d\left(  X\right)  =4C_{4}+\sum_{v\in V}\binom{d\left(  v\right)  }%
{2}\nonumber\\
&  =4C_{4}+\frac{1}{2}\sum_{v\in V}d^{2}\left(  v\right)  -e\left(  G\right)
\text{.} \label{in}%
\end{align}
On the other hand, writing $CW_{4}\left(  G\right)  $ for the number of closed
walks of length $4$, it is known that
\[
CW_{4}\left(  G\right)  =8C_{4}\left(  G\right)  +2\sum_{i\in V}d^{2}\left(
i\right)  -2e\left(  G\right)  .
\]
Hence inequality (\ref{in}) implies that
\begin{equation}
2\sum_{X\in\binom{V}{2}}d^{2}\left(  X\right)  =8C_{4}+\sum_{i\in V}%
d^{2}\left(  i\right)  -2e\left(  G\right)  =CW_{4}\left(  G\right)
-\sum_{v\in V}d^{2}\left(  v\right)  . \label{in3}%
\end{equation}
Finally, in view of the identity
\[
CW_{4}\left(  G\right)  =\lambda_{1}^{4}\left(  G\right)  +\cdots+\lambda
_{n}^{4}\left(  G\right)
\]
and Hofmeister's bound
\[
\lambda^{2}\left(  G\right)  \geq\frac{1}{n}\sum_{v\in V}d^{2}\left(
v\right)  ,
\]
inequality (\ref{in2}) follows immediately from (\ref{in3}).
\end{proof}

\begin{proof}
[\textbf{Proof of Theorem \ref{th0}}]Suppose that $t,r,K,$ and $G$ satisfy the
premises of the theorem. Note that for any pair $X\in\binom{V}{2}$, the graph
$G\left[  \Gamma\left(  X\right)  \right]  $ is $K_{r}$-free; hence,
Tur\'{a}n's theorem implies that
\[
k_{2}\left(  G\left[  \Gamma\left(  X\right)  \right]  \right)  \leq
\frac{\left(  r-2\right)  }{2\left(  r-1\right)  }d^{2}\left(  X\right)  ,
\]
and therefore,
\[
i_{2}\left(  G\left[  \Gamma\left(  X\right)  \right]  \right)  \geq
\binom{d\left(  X\right)  }{2}-\frac{\left(  r-2\right)  }{2\left(
r-1\right)  }d^{2}\left(  X\right)  =\frac{1}{2\left(  r-1\right)  }%
d^{2}\left(  X\right)  -\frac{1}{2}d\left(  X\right)  .
\]
Summing this inequality over all pairs $X\in\binom{V}{2}$ and applying
Proposition \ref{pro1}, we obtain
\begin{align*}
\sum_{I\in I_{2}\left(  G\right)  }\binom{d\left(  I\right)  }{2} &  \geq
\frac{1}{2\left(  r-1\right)  }\sum_{X\in\binom{V}{2}}d^{2}\left(  X\right)
-\frac{1}{2}\sum_{X\in\binom{V}{2}}d\left(  X\right)  \\
&  \geq\frac{1}{4\left(  r-1\right)  }\left(  \lambda^{4}\left(  G\right)
-n\lambda^{2}\left(  G\right)  \right)  -\frac{1}{2}\sum_{v\in V}%
\binom{d\left(  v\right)  }{2}.
\end{align*}
Using Hofmeister's bound and some algebra, we find that
\begin{align*}
\sum_{I\in I_{2}\left(  G\right)  }\binom{d\left(  I\right)  }{2} &  \geq
\frac{1}{4\left(  r-1\right)  }\left(  \lambda^{4}\left(  G\right)
-n\lambda^{2}\left(  G\right)  \right)  -\frac{1}{4}n\lambda^{2}\left(
G\right)  \\
&  =\frac{1}{4\left(  r-1\right)  }\lambda^{2}\left(  G\right)  \left(
\lambda^{2}\left(  G\right)  -rn\right)  \\
&  \geq \frac{K^{2}}{4\left(  r-1\right)  }\left(  K^{2}-r\right)  n^{2}\geq
\frac{K^{3}}{2}\binom{n}{2}>K\binom{n}{2}.
\end{align*}
That is to say, there is an $I\in I_{2}\left(  G\right)  $, such that
$d\left(  I\right)  >K\geq R_{r,t}$. Since $G\left[  \Gamma\left(  I\right)
\right]  $ is $K_{r}$-free, it follows that $\overline{K_{t}}\prec G\left[  \Gamma\left(
I\right)  \right]  ,$ and so $K_{2,t}\prec G$, completing the proof.
\end{proof}

The proof of Theorem \ref{th1} is similar to the proof of Theorem \ref{th0},
but needs a more technical approach; in particular, Tur\'{a}n's theorem does
not apply as above. The focal point of the proof is the fact that if
\begin{equation}
\sum_{I\in I_{s}\left(  G\right)  }\binom{d\left(  I\right)  }{2}\geq
\binom{R(H, K_{t})}{2}\binom{n}{s}, \label{main}%
\end{equation}
then $G$ has an independent $s$-set $I$ such that $d\left(  I\right)  \geq
R(H, K_{t})$; since $G\left[  \Gamma\left(  I\right)  \right]  $ is $H$-free,
it follows that $\overline{K}_{t}\prec G\left[  \Gamma\left(  I\right)
\right]  $, and hence $K_{s,t}\prec G$.

In turn, we deduce (\ref{main}) along the following lines: we show that the
premises of the theorem imply that $G$ contains many copies of $K_{2,s}$,
albeit not necessarily induced. However, the fact that $G$ is $H$-free implies
that for a positive proportion of these subgraphs of $G$ their part of size
$s$ is an independent set. The requirement $K\geq(R(H, K_{t}))^{2/s}R(H,
K_{s})$ is sufficient to obtain (\ref{main}) eventually.

To make this argument precise, we need three additional technical statements.

\begin{proposition}
\label{cor1}If $k\geq2$ and $G$ is a graph of order $n$ with $\lambda\left(
G\right)  \geq\sqrt{n}$, then
\[
\sum_{X\in\binom{V}{2}}d^{k}\left(  X\right)  \geq\frac{1}{2n^{k-2}}%
\lambda^{k}\left(  G\right)  \left(  \lambda^{2}\left(  G\right)  -n\right)
^{k/2}.
\]

\end{proposition}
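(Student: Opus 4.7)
The plan is to bootstrap Proposition \ref{pro1} (which is the case $k=2$ in disguise) to general $k\geq 2$ via a power-mean (Jensen) inequality applied to the $\binom{n}{2}$ nonnegative quantities $\{d(X) : X \in \binom{V}{2}\}$.

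First, I would rewrite the bound from Proposition \ref{pro1} as
\[
\sum_{X\in\binom{V}{2}} d^{2}(X) \;\geq\; \tfrac{1}{2}\,\lambda^{2}(G)\bigl(\lambda^{2}(G)-n\bigr),
\]
which is meaningful precisely because the hypothesis $\lambda(G)\geq\sqrt{n}$ makes the right-hand side nonnegative. Next, since the function $x\mapsto x^{k/2}$ is convex for $k\geq 2$, Jensen's inequality applied to the uniform measure on $\binom{V}{2}$ gives
\[
\frac{1}{\binom{n}{2}}\sum_{X\in\binom{V}{2}} d^{k}(X)
\;\geq\; \left(\frac{1}{\binom{n}{2}}\sum_{X\in\binom{V}{2}} d^{2}(X)\right)^{k/2},
\]
and hence
\[
\sum_{X\in\binom{V}{2}} d^{k}(X)
\;\geq\; \binom{n}{2}^{1-k/2}\left(\sum_{X\in\binom{V}{2}} d^{2}(X)\right)^{k/2}.
\]

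Plugging in the bound from Proposition \ref{pro1} and using $\binom{n}{2}\leq n^{2}/2$ (which, since $1-k/2\leq 0$, yields $\binom{n}{2}^{1-k/2}\geq (n^{2}/2)^{1-k/2} = 2^{k/2-1}\,n^{2-k}$), I obtain
\[
\sum_{X\in\binom{V}{2}} d^{k}(X)
\;\geq\; 2^{k/2-1}\,n^{2-k}\cdot 2^{-k/2}\,\lambda^{k}(G)\bigl(\lambda^{2}(G)-n\bigr)^{k/2}
\;=\; \frac{1}{2n^{k-2}}\,\lambda^{k}(G)\bigl(\lambda^{2}(G)-n\bigr)^{k/2},
\]
which is the desired inequality.

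There is essentially no hard step here; the only place where care is needed is verifying that the hypothesis $\lambda(G)\geq\sqrt{n}$ is used precisely to guarantee that the quantity $\lambda^{2}(G)-n$ is nonnegative, so that raising the Proposition \ref{pro1} bound to the power $k/2$ makes sense and preserves the direction of the inequality. Everything else is arithmetic manipulation of binomial coefficients.
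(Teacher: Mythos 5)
Your proof is correct and follows essentially the same route as the paper: both reduce to Proposition \ref{pro1} via the Power Mean (equivalently, Jensen with the convex map $x\mapsto x^{k/2}$) applied to the quantities $d^2(X)$, and then use $\binom{n}{2}\leq n^2/2$ together with $1-k/2\leq 0$ to clean up the constants. The only difference is cosmetic (Jensen phrased on $d^2(X)$ versus the power-mean comparison of the $k$-th and $2$nd means of $d(X)$), and your remark that $\lambda(G)\geq\sqrt n$ is needed to keep $\lambda^2(G)-n$ nonnegative before raising to the power $k/2$ is exactly the right point of care.
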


\begin{proof}
The Power Mean inequality and inequality (\ref{in2}) imply that
\begin{align*}
\left(  \binom{n}{2}^{-1}\sum_{X\in\binom{V}{2}}d^{k}\left(  X\right)
\right)  ^{1/k}  &  \geq\left(  \binom{n}{2}^{-1}\sum_{X\in\binom{V}{2}}%
d^{2}\left(  X\right)  \right)  ^{1/2}\\
&  \geq\binom{n}{2}^{-1/2}\left(  \frac{1}{2}\left(  \lambda^{4}\left(
G\right)  -n\lambda^{2}\left(  G\right)  \right)  \right)  ^{1/2}.
\end{align*}
Hence, after simple algebra, we get
\begin{align*}
\sum_{X\in\binom{V}{2}}d^{2}\left(  X\right)   &  \geq\binom{n}{2}%
^{1-k/2}\left(  \frac{1}{2}\left(  \lambda^{4}\left(  G\right)  -n\lambda
^{2}\left(  G\right)  \right)  \right)  ^{k/2}\\
&  \geq n^{2-k}2^{k/2-1}2^{-k/2}\left(  \lambda^{4}\left(  G\right)
-n\lambda^{2}\left(  G\right)  \right)  ^{k/2}\\
&  =\frac{1}{2n^{k-2}}\lambda^{k}\left(  G\right)  \left(  \lambda^{2}\left(
G\right)  -n\right)  ^{k/2}.
\end{align*}
\end{proof}

\begin{proposition}
\label{pro2}Let $K\geq2,$ $s\geq3,$ and $n\geq s-1.$ If $G$ is a graph of
order $n$ with $\lambda\left(  G\right)  \geq Kn^{1-1/s}$, then $G$ contains
at least
\[
K^{s}\binom{n}{s}%
\]
copies of $K_{2,s}$.
\end{proposition}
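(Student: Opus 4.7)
The plan is to count $K_{2,s}$'s in $G$ as $\sum_{X \in \binom{V}{2}} \binom{d(X)}{s}$ and bound this from below. I apply Proposition~\ref{cor1} with $k=s$ to control $\sum_X d^s(X)$, and then convert to $\sum_X \binom{d(X)}{s}$ via the elementary estimate $\binom{m}{s}\geq m^s/(2s!)$, which is valid for $m\geq s(s-1)$.

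Because $K\geq 2$ and $s\geq 3$, the hypothesis yields $\lambda^2\geq K^2 n^{2-2/s}\geq 4n$, so in particular $\lambda\geq\sqrt n$ (allowing Proposition~\ref{cor1} to apply) and $\lambda^2-n\geq (3/4)\lambda^2$. Proposition~\ref{cor1} with $k=s$ then gives
\[
\sum_X d^s(X)\;\geq\;\frac{\lambda^s(\lambda^2-n)^{s/2}}{2n^{s-2}}\;\geq\;\frac{(3/4)^{s/2}}{2n^{s-2}}\lambda^{2s}\;\geq\;\frac{(3/4)^{s/2}}{2}K^{2s}n^s.
\]
The Weierstrass product inequality $\prod_{i=1}^{s-1}(1-i/m)\geq 1-\binom{s}{2}/m$ delivers $\binom{m}{s}\geq m^s/(2s!)$ whenever $m\geq s(s-1)$. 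Pairs $X$ with $d(X)<s(s-1)$ contribute $0$ to $\sum_X \binom{d(X)}{s}$ and at most $\binom{n}{2}(s(s-1))^s$ to $\sum_X d^s(X)$, hence
\[
\sum_X \binom{d(X)}{s}\;\geq\;\frac{1}{2s!}\left(\sum_X d^s(X) - \binom{n}{2}(s(s-1))^s\right)\;\geq\;\frac{(3/4)^{s/2}}{4s!}K^{2s}n^s - C_s n^2
\]
for a constant $C_s$ depending only on $s$.

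Using $\binom{n}{s}\leq n^s/s!$, the target $\sum_X\binom{d(X)}{s}\geq K^s\binom{n}{s}$ reduces (up to the lower-order $C_s n^2$ term) to the numerical bound $K^s\geq 4(4/3)^{s/2}$. For $K\geq 2$, $s\geq 3$ this is easily checked: the tight case $K=2,s=3$ gives $8\geq 4(4/3)^{3/2}\approx 6.16$, and the slack widens with $s$. Since $\lambda\leq n-1$ combined with $\lambda\geq Kn^{1-1/s}$ forces $n\geq K^s\geq 2^s$, the $C_s n^2$ error is dominated by the main $n^s$ term whenever the hypothesis is non-vacuous. The main obstacle is carrying the multiplicative constants through cleanly so that the final inequality lands on the factor $K^s$: replacing the Weierstrass estimate by the cruder $\binom{m}{s}\geq(m/s)^s$ would incur a factor $s^s/s!\sim e^s$ and fail to deliver $K^s$ at $K=2$, so the tight Weierstrass step, which loses only a factor of $2$, is crucial.
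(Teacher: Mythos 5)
Your overall strategy is the same as the paper's: apply Proposition~\ref{cor1} with $k=s$ to bound $\sum_X d^s(X)$ from below, convert that power sum into $\sum_X \binom{d(X)}{s}$, and observe that for $s>2$ this last sum is exactly the number of copies of $K_{2,s}$. The only structural difference is the conversion step: the paper uses a pointwise comparison of $\binom{m}{s}$ with $\frac{1}{s!}\left(m^s-(s-1)m^{s-1}\right)$, whereas you discard the pairs with $d(X)<s(s-1)$ and apply the Weierstrass bound $\binom{m}{s}\ge m^s/(2s!)$ to the rest. That step of yours is correct, and the resulting error term $\binom{n}{2}(s(s-1))^s/(2s!)$ is tracked honestly --- until the last sentence.

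The gap is in that last sentence. You assert that the $C_s n^2$ error is dominated by the slack in the main term ``whenever the hypothesis is non-vacuous,'' justified only by $n>K^s\ge 2^s$. Unwinding your inequalities, what you actually need is $n^{s-2}\bigl(\tfrac14(3/4)^{s/2}K^{2s}-K^s\bigr)\ge\tfrac14(s(s-1))^s$, which at $s=3$, $K=2$ reads $2.39\,n\ge 54$, i.e.\ $n\ge 23$. But the hypothesis is already non-vacuous at $n=11$: the graph $K_{11}$ has $\lambda=10>2\cdot 11^{2/3}\approx 9.89$. There your chain produces a lower bound of roughly $\tfrac{1}{12}\bigl(\tfrac12(3/4)^{3/2}\cdot 64\cdot 11^3-55\cdot 216\bigr)\approx 1315$ copies against the target $8\binom{11}{3}=1320$, so the argument genuinely fails to close --- and this is using the exact $\binom{n}{s}$; with your cruder substitute $K^s n^s/s!$ it fails for every $n\le 22$. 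The statement itself is fine in this range (for $K_{11}$ the true count is $55\binom{9}{3}=4620$); the problem is purely that your lower bounds are not tight enough there, and $n>K^s$ does not supply the needed room when $s=3$. To repair the proof you need either a sharper conversion (a degree threshold smaller than $s(s-1)$, or a pointwise bound such as $\binom{m}{3}\ge\frac16(m-2)^3$ for $s=3$) or a separate argument covering the finitely many problematic values of $n$ at $s=3$. Everything before this final absorption step is sound.
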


\begin{proof}
Proposition \ref{cor1} implies that
\begin{align*}
\sum_{X\in\binom{V}{2}}d^{s}\left(  X\right)   &  \geq\frac{1}{2n^{s-2}%
}\lambda^{s}\left(  G\right)  \left(  \lambda^{2}\left(  G\right)  -n\right)
^{s/2}>\frac{K^{s}n^{s-1}}{2n^{s-2}}\left(  K^{2}n^{2-2/s}-n\right)  ^{s/2}\\
&  \geq\frac{K^{s}}{2}n\left(  3n^{2-2/s}\right)  ^{s/2}>2K^{s}n^{s}.
\end{align*}
Next, we find that
\begin{align*}
\sum_{X\in\binom{V}{2}}\binom{d\left(  X\right)  }{s}  &  \geq\frac{1}{s!}%
\sum_{X\in\binom{V}{2}}\left(  d^{s}\left(  X\right)  -\left(  s-1\right)
d^{s-1}\left(  X\right)  \right)  =-\frac{\left(  s-1\right)  n^{s-1}}%
{s!}+\frac{1}{s!}\sum_{X\in\binom{V}{2}}d^{s}\left(  X\right) \\
&  \geq2K^{s}\frac{n^{s}}{s!}-\frac{\left(  s-1\right)  n^{s-1}}{s!}\geq
K^{s}\frac{n^{s}}{s!}>K^{s}\binom{n}{s}.
\end{align*}
To complete the proof, it is enough to note that if $s>2$, the sum
\[
\sum_{X\in\binom{V}{2}}\binom{d\left(  X\right)  }{s}%
\]
is precisely the number of $K_{2,s}$ copies in $G$.
\end{proof}

\begin{proposition}
\label{pro3}Let $s\geq2$. If $G$ is a $H$-free graph of order $n$, then
\begin{equation}
i_{s}\left(  G\right)  \geq\binom{R(H, K_{s})}{s}^{-1}\binom{n}{s}-1.
\label{in1}%
\end{equation}

\end{proposition}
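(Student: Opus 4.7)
The plan is a standard double-counting argument driven by the Ramsey number. I would set $R := R(H, K_s)$ and invoke the defining property of $R$: every graph on $R$ vertices that is $H$-free must contain an independent set of size $s$. Since any induced subgraph of $G$ is itself $H$-free, this means that for every $R$-subset $S \in \binom{V(G)}{R}$, the induced subgraph $G[S]$ contains at least one element of $I_s(G)$.

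Next, I would count pairs $(I, S)$ with $I \in I_s(G)$ and $S \in \binom{V(G)}{R}$ containing $I$. Assuming $n \geq R$, the previous step guarantees that the number of such pairs is at least $\binom{n}{R}$. On the other hand, each independent $s$-set lies in exactly $\binom{n-s}{R-s}$ such $R$-subsets, so double counting yields
$$i_s(G) \cdot \binom{n-s}{R-s} \geq \binom{n}{R}.$$
Rearranging via the standard identity $\binom{n}{R}\binom{R}{s} = \binom{n}{s}\binom{n-s}{R-s}$ would then deliver the stronger inequality $i_s(G) \geq \binom{R}{s}^{-1}\binom{n}{s}$, so the stated bound follows with $1$ to spare.

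The only wrinkle is the small-$n$ regime $n < R$, where the Ramsey step has no content because there are no $R$-subsets to begin with. However, in this range one has $\binom{n}{s} \leq \binom{R-1}{s} < \binom{R}{s}$, whence $\binom{R}{s}^{-1}\binom{n}{s} - 1 < 0 \leq i_s(G)$ and the claim is vacuous. I do not expect any genuine obstacle: the whole argument is a textbook Ramsey-plus-double-counting routine, and the explicit \emph{$-1$} in the statement is included precisely to fold the small-$n$ boundary case into a single uniform inequality valid for all $n$.
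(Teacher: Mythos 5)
Your proposal is correct and matches the paper's proof essentially verbatim: both handle $n < R(H,K_s)$ as a trivial case, then double count pairs consisting of an independent $s$-set and an $R(H,K_s)$-subset containing it, and rearrange via the identity $\binom{n}{R}\binom{R}{s}=\binom{n}{s}\binom{n-s}{R-s}$ to get the bound (with the $-1$ to spare). No issues.
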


\begin{proof}
If $n < R(H, K_s)$, then the inequality is trivial, so assume $n\geq R(H, K_s)$. Since $G$ is $H$-free, any set of $R(H, K_s)$ vertices must contain an independent set of size $s$. Each independent set of size $s$ may be contained in at most \[\binom{n-s}{R(H, K_s)-s}\] independent sets of size $R(H, K_s)$. Therefore,
\[
i_s(G) \geq \binom{n}{R(H, K_s)}\binom{n-s}{R(H, K_s)-s}^{-1} = \binom{n}{s}\binom{R(H, K_s)}{s}^{-1}.
\]
\end{proof}

Armed with the above propositions, we encounter no difficulty in proving
Theorem \ref{th1}.\medskip

\begin{proof}
[\textbf{Proof of Theorem \ref{th1}}]Suppose that $s,t,H,K,$ and $G$ satisfy
the premises of the theorem. Note that $n>s-1$, because $n>\lambda\left(
G\right)  \geq Kn^{1-1/s}$ and therefore $n>K^{s}>2^{s}>s-1.$
Further, for any $X\in\binom{V}{2}$, the graph $G\left[  \Gamma\left(
X\right)  \right]  $ is $H$-free; hence, Proposition \ref{pro3} implies
that
\[
i_{s}\left(  G\left[  \Gamma\left(  X\right)  \right]  \right)  \geq
\binom{R(H, K_s)}{s}^{-1}\binom{d\left(  X\right)  }{s}-1.
\]
Summing this inequality over all pairs $X\in\binom{V}{2}$ and double counting, we obtain
\begin{equation}
\sum_{I\in I_{s}\left(  G\right)  }\binom{d\left(  I\right)  }{2}\geq
-\binom{n}{2}+\binom{R(H, K_s)}{s}^{-1}\sum_{X\in\binom{V}{2}}\binom{d\left(
X\right)  }{s}. \label{in6}%
\end{equation}
On the other hand, Proposition \ref{pro2} implies that $G$ contains at least
\[
R(H, K_t)^{2}R(H, K_s)^{s}\binom{n}{s}%
\]
copies of $K_{2,s}$, that is to say,%
\[
\sum_{X\in\binom{V}{2}}\binom{d\left(  X\right)  }{s}\geq R(H, K_t)^{2}%
R(H, K_s)^{s}\binom{n}{s}.
\]
Combining this inequality with (\ref{in6}), we find that%
\[
\sum_{I\in I_{s}\left(  G\right)  }\binom{d\left(  I\right)  }{2}\geq
-\binom{n}{2}+\binom{R(H, K_s)}{s}^{-1}R(H, K_t)^{2}R(H, K_s)^{s}\binom{n}{s}%
>\binom{R(H, K_t)}{2}\binom{n}{s}.
\]
Therefore, inequality (\ref{main}) holds; as shown above, it implies Theorem
\ref{th1}.
\end{proof}

\section{Forbidding $C_{5}$ and induced $K_{2,t}$}

\label{c5} We begin this section with a general lemma that gives an upper
bound on $\lambda(G)$ that holds whenever $G$ is $H$-free and has no induced
$K_{2,t}$.  Because we will be working with eigenvectors, it 
will be convenient to assume throughout this section that $V(G) = \{1,2, \dots, n \}$.
Furthermore, given a pair of vertices $\{i ,j \}$, we will write 
$d(i,j)$ rather than $d ( \{ i , j \} )$ and we do the same for $\Gamma ( \{i ,j \} )$.  

\begin{lemma}
\label{lemma 1} Let $t \geq2$ be an integer and $H$ be a graph with $h \geq2$
vertices. If $G$ is an $H$-free graph of order $n$ with no induced copy of
$K_{2,t}$, then for any vertex $x \in V(H)$,
\[
\lambda(G)^{2} \leq( R ( H - x , K_{t} ) + 1) n + \left(  \sum_{ \{i ,j \} \in
E(G) } d( i ,j )^{ 2 } \right)  ^{1/2} \left(  \frac{ \omega(G) - 1}{2
\omega(G)} \right)  ^{1/2} .
\]
Additionally, if $x$ and $y$ is a pair of nonadjacent vertices in $H$, then
$R(H - x , K_{t})$ can be replaced with $R( H - x - y , K_{t})$ in the
estimate above.
\end{lemma}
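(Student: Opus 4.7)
The plan is to use the Rayleigh identity $\lambda(G)^{2} = \mathbf{v}^{T} A^{2} \mathbf{v}$, where $\mathbf{v} = (v_{1},\dots,v_{n})$ is the Perron eigenvector normalized to $\|\mathbf{v}\|_{2} = 1$. Since $(A^{2})_{ii} = d(i)$ and $(A^{2})_{ij} = d(i,j)$ for $i\neq j$, this expands to
\[
\lambda(G)^{2} \;=\; \sum_{i \in V(G)} d(i)\, v_{i}^{2} \;+\; 2 \sum_{\{i,j\}\in \binom{V}{2}} d(i,j)\, v_{i} v_{j}.
\]
I would bound the diagonal contribution crudely by $(n-1)\sum v_{i}^{2} = n-1$, and split the second sum into edges and non-edges of $G$, treating each with a dedicated estimate.

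For a non-edge $\{i,j\}$ of $G$, the key claim is that $d(i,j) \leq R(H-x,K_{t}) - 1$. If this were violated, then $G[\Gamma(i)\cap\Gamma(j)]$ would have at least $R(H-x,K_{t})$ vertices, so by Ramsey it contains either a copy of $H-x$ as a subgraph or an independent set of size $t$. In the first case, extending by $i$ in the role of $x$ (using that $i$ is joined to every vertex of $\Gamma(i)\cap\Gamma(j)$) produces $H\subseteq G$, contradicting $H$-freeness; in the second case, the independent set together with the non-edge $\{i,j\}$ yields an induced $K_{2,t}$, another contradiction. Summing and applying Cauchy--Schwarz in the form $\bigl(\sum v_{i}\bigr)^{2} \leq n$, the non-edge contribution is bounded by roughly $(R(H-x,K_{t})-1)\, n$. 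The refinement for a pair of \emph{nonadjacent} vertices $x,y\in V(H)$ runs identically: an embedded copy of $H-x-y$ inside $\Gamma(i)\cap\Gamma(j)$ is completed to $H\subseteq G$ by letting $i$ play the role of $x$ and $j$ the role of $y$, which is legitimate precisely because $xy\notin E(H)$, so no edge $ij$ is needed.

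For the edge contribution, I would apply Cauchy--Schwarz to get
\[
2\sum_{\{i,j\}\in E(G)} d(i,j)\,v_{i} v_{j} \;\leq\; 2\Bigl(\sum_{\{i,j\}\in E(G)} d(i,j)^{2}\Bigr)^{1/2} \Bigl(\sum_{\{i,j\}\in E(G)} v_{i}^{2} v_{j}^{2}\Bigr)^{1/2},
\]
and then invoke the Motzkin--Straus theorem with weights $v_{i}^{2}$, which sum to $1$, to deduce $\sum_{\{i,j\}\in E(G)} v_{i}^{2} v_{j}^{2} \leq \tfrac{1}{2}(1 - 1/\omega(G)) = (\omega(G)-1)/(2\omega(G))$. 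Assembling the diagonal, non-edge, and edge estimates then yields the stated bound. The main obstacle I anticipate is purely bookkeeping: keeping track of the constants so that they collapse to the exact form displayed in the lemma, in particular recognizing that Motzkin--Straus is the natural source of the unusual factor $\bigl((\omega(G)-1)/(2\omega(G))\bigr)^{1/2}$; each step individually is routine.
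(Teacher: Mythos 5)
Your proposal is correct and follows essentially the same route as the paper: expand $\lambda^2$ via the normalized Perron vector, split the off-diagonal sum into non-edges (bounded by the Ramsey argument $d(i,j) < R(H-x,K_t)$, with the non-adjacent pair $x,y$ refinement handled exactly as you describe) and edges (bounded by Cauchy--Schwarz plus Motzkin--Straus). One remark: both your derivation and the paper's own intermediate steps actually produce a factor $2$ in front of the edge term $\bigl(\sum_{\{i,j\}\in E(G)} d(i,j)^2\bigr)^{1/2}\bigl(\tfrac{\omega(G)-1}{2\omega(G)}\bigr)^{1/2}$, which the lemma's displayed bound silently drops --- so your bookkeeping worry is warranted, but the discrepancy lies in the paper's stated constant, not in your argument.
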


\begin{proof}
Let $x = (x_1 , \dots , x_n )$ be a non-negative eigenvector for the eigenvalue $\lambda := \lambda (G)$ scaled to have $2$-norm equal to $1$.
We have
\[
\lambda^2 = \lambda^2 \sum_{i = 1}^{n} x_i^2 = \sum_{i = 1}^{n} ( \lambda x_i )^2 =
\sum_{i = 1}^{n} \left( \sum_{j \in \Gamma (i) } x_j \right)^2 =
\sum_{i = 1}^{n} d(i) x_i^2 + 2 \sum_{1 \leq i < j \leq n } d(i,j) x_i x_j .
\]
If $1 \leq i < j \leq n$ and $\{i ,j \} \notin E(G)$, then for any vertex $x \in V(H)$,
the common neighborhood $\Gamma (i,j)$ cannot contain a copy of $H- x$ or an
independent set of size $t$, otherwise we find a copy of  $H$ or an induced copy of $K_{2,t}$.
Therefore,
\begin{equation}\label{degree ramsey}
d(i,j) < R ( H- x , K_t) .
\end{equation}
Using this inequality, we have for any vertex $x \in V(H)$,
\begin{eqnarray*}
\lambda^2 & = & \sum_{i=1}^{n} d(i) x_i^2 + 2 \sum_{1 \leq i < j \leq n } d(i,j) x_i x_j \\
& < &  n \sum_{i=1}^{n} x_i^2 + 2 \sum_{ \{i ,j \} \notin E(G) } d(i,j) x_i x_j +
2 \sum_{ \{i , j \} \in E(G) } d(i,j) x_i x_j \\
& < & n + 2 R( H - x  , K_t)  \sum_{ \{i ,j \} \notin E(G) } x_i x_j +  2 \sum_{ \{i , j \} \in E(G) } d(i,j) x_i x_j \\
& \leq & n
+ R ( H - x , K_t )  \sum_{i = 1}^n \sum_{j = 1}^n x_i x_j + 2
\sum_{ \{i ,j \} \in E(G) } d(i,j) x_i x_j.
\end{eqnarray*}
The double sum
\[
\sum_{i=1}^{n} \sum_{j = 1}^{n} x_i x_j
\]
is at most $n$.  This follows from two applications of the Cauchy-Schwarz inequality and the fact that $\| x \| = 1$.
Therefore,
\[
\lambda^2  \leq (1 + R( H - x , K_t ) ) n + 2 \sum_{ \{i ,j \} \in E(G) } d(i,j) x_i x_j.
\]
By Cauchy-Schwarz,
\begin{equation}\label{cauchy}
\sum_{ \{i,j \} \in E(G) } d(i,j) x_i x_j
\leq
\left( \sum_{ \{i ,j \} \in E(G) } d(i,j)^2 \right)^{1/2}
\left( \sum_{ \{i,j \} \in E(G) } x_i^2 x_j^2 \right)^{1/2}.
\end{equation}
Since $G$ is $H$-free, $G$ does not contain a complete graph on $h := | V(H) |$ vertices.  As
$\sum_{i=1}^{n} x_i^2 = 1$, we can apply the Motzkin-Straus inequality \cite{ms} to get
\begin{equation}\label{motzkin}
\sum_{ \{i ,j \} \in E(G) } x_i^2 x_j^2 \leq \frac{ \omega (G) - 1 }{ 2 \omega (G) } .
\end{equation}
Combining (\ref{cauchy}) and (\ref{motzkin}), we have
\[
\sum_{ \{i , j \} \in E(G) } d(i,j) x_i x_j \leq \left( \sum_{ \{i,j \} \in E(G) } d(i,j)^2 \right)^{1/2}
\left( \frac{\omega (G)  - 1}{2 \omega (G)} \right)^{1/2}.
\]
We conclude that for any vertex $x \in V(H)$,
\[
\lambda^2 \leq ( R ( H - x , K_t ) + 1) n +
\left( \sum_{ \{i ,j \} \in E(G) } d( i ,j )^{  2 } \right)^{1/2} \left( \frac{ \omega (G) -  1}{2 \omega (G) } \right)^{1/2}.
\]
If $H$ contains a pair of nonadjacent vertices $x$ and $y$, then (\ref{degree ramsey}) can be replaced
with
\[
d(i,j) < R( H - x - y , K_t)
\]
and the rest of the proof is the same.
\end{proof}

We may use Lemma \ref{lemma 1} to be more precise than Theorem \ref{th0} in
the case that we can say something about the number of triangles in the graph.

\begin{proof}[Proof of Theorem \ref{th3}]
Let $t \geq 2$ be an integer and $G$ be a $C_5$-free graph with no induced copy of $K_{2,t}$.  We must show that
\[
\lambda (G) \leq \sqrt{ 2 t + 0.375^{1/2}  }n^{1/2} + O (n^{3/8}) .
\]
Using the fact that $\omega (G) \leq 4$, we have by Lemma \ref{lemma 1},
\[
\lambda(G)^2 \leq ( R( P_3  , K_t) + 1) n +  \left( \sum_{ \{i,j \} \in E(G) } d(i,j)^2 \right)^{1/2} \left( \frac{3}{8} \right)^{1/2}.
\]
By \cite{parsons}, $R( P_3 , K_t ) = 2(t - 1)  + 1 = 2t - 1$.
Now
\begin{eqnarray*}
\sum_{ \{i,j \} \in E(G) } d(i,j)^2 & = & 2 \sum_{ \{i ,j \} \in E(G) } \binom{ d(i,j) }{2} + \sum_{ \{i , j \} \in E(G) } d(i,j) \\
& = & 2 \sum_{ \{i ,j \} \in E(G) } \binom{ d(i,j) }{2} + 3 k_3 (G) \\
& \leq & 2 \sum_{ \{i ,j \} \in E(G) } \binom{ d(i,j) }{2} + c n^{3/2}
\end{eqnarray*}
for some constant $c$.  In the last line we have used a result of Bollobas and Gy\"{o}ri \cite{bg}
which bounds the number of triangles in a $C_5$-free graph.
The sum $\displaystyle\sum_{ \{i ,j \} \in E(G) } \binom{ d(i,j) }{2}$ counts pairs of vertices,
say $ \{ z_1, z_2 \} \subset V(G)$,
such that there is an edge $\{i ,j \} \in E(G)$ for which $\{z_1 , z_2 \} \subset \Gamma (i,j)$.
Suppose that this sum counts the same pair more than once.
Let $\{z_1 , z_2 \} \subset \Gamma (i,j ) \cap \Gamma (x,y)$.
Without loss of generality, we may assume that $i$, $j$, and $x$ are all distinct vertices.
In this case, $i j z_1 x z_2 i$ is a cycle of length 5 which is a contradiction.
Thus,
\[
2 \sum_{ \{i,j \} \in E(G) } \binom{ d(i,j) }{2} \leq 2 \binom{n}{2} \leq n^2.
\]
We conclude that
\[
\lambda(G)^2 \leq 2t n +  \left(  ( n^2 + cn^{3/2}) \right)^{1/2} (3/8)^{1/2}
\leq (2t + 0.375^{1/2} ) n + O (n^{3/4} ).
\]
Taking square roots completes the proof of Theorem \ref{th3}.
\end{proof}

\end{document}